\documentclass[10pt,reqno]{amsart}

\usepackage{a4wide}
\usepackage{dsfont} % used in the command \I
\usepackage{pstricks}
\usepackage[all]{xy}  % used in the commands \I and \Ww
\usepackage{amssymb,amsmath, comment}
\usepackage{mathrsfs}
\usepackage{graphicx}

\newtheorem{proposition}{Proposition}[section]
\newtheorem{theorem}[proposition]{Theorem}
\newtheorem{lemma}[proposition]{Lemma}

\usepackage{hyperref}

\theoremstyle{remark}
\newtheorem{definition}[proposition]{Definition}

\newtheorem{remark}{Remark}
\newcommand{\I}{\mathds{1}}

\newcommand{\id}{\mathrm{id}}
\newcommand{\HH}{\mathbb{H}}
\newcommand{\GG}{\mathbb{G}}

\newcommand{\AM}{\mathcal{A}}

\newcommand{\BM}{\mathcal{B}}
\newcommand{\CM}{\mathcal{C}}
\newcommand{\C}{\mathrm{C}}
\newcommand{\M}{\mathrm{M}}
\newcommand{\B}{\mathrm{B}}
\newcommand{\Mor}{\mathrm{Mor}}

\newcommand{\ww}{\mathrm{W}}
\newcommand{\WW}{{\mathds{V}\!\!\text{\reflectbox{$\mathds{V}$}}}}

\newcommand{\Ob}{\mathrm{Obj}}
\newcommand{\mC}{\mathcal{C}^*}

\newcommand{\cad}{ad}
\numberwithin{equation}{section}

\begin{document}
\author{Mariusz Budzi{\'n}ski}
\address{Faculty of Physics, University of Warsaw, Poland}
\email{mb348587@okwf.fuw.edu.pl }
\author{Pawe{\l} Kasprzak}
\address{Department of Mathematical Methods in Physics, Faculty of Physics, University of Warsaw, Poland}
\email{pawel.kasprzak@fuw.edu.pl}
\title{Quantum families  of   quantum group  homomorphisms}
\keywords{Quantum families, homomorphisms}
\subjclass[2010]{Primary 16T30}

\thanks{This work was supported by the "Harmonia" NCN grant
2012/06/M/ST1/00169.}

\begin{abstract} 
The notion of a quantum family of maps  has been introduced in the  framework of $\C^*$-algebras. As  in the classical case, one may consider  a  quantum family of maps preserving additional structures (e.g.  quantum family of maps preserving a  state).  In this paper we define a quantum family of homomorphisms of locally compact quantum groups. Roughly speaking, we  show that   such a family  is  classical.   The purely algebraic counterpart of the discussed notion, i.e. a quantum family of homomorphisms of Hopf algebras, is introduced  and   the algebraic counterpart of the aforementioned  result is proved.  Moreover, we show that a quantum family of homomorphisms of Hopf algebras is  consistent with the counits and coinverses of the given Hopf algebras. We compare our concept with {\it weak coactions} introduced by Andruskiewitsch and  we apply it  to the analysis of adjoint coaction. 
\end{abstract}
\maketitle
 
\section{Introduction}
Gelfand-Najmark theorem identifies  a commutative $\C^*$-algebra $A$ with the algebra of  continuous, vanishing at infinity   functions on the spectrum $\textrm{Sp}(A)$. This identification  is a  source of many concepts in the theory of non-commutative $\C^*$-algebras, e.g.   quantum family of maps between quantum spaces. The notion was introduced in  \cite{Wo1} and further developed in   \cite{solfam}, \cite{Wang}.  For the recent survey article on this subject we refer to \cite{SkSol}. Since  we  also consider  the purely algebraic counterpart of the concept, we prefer to use the name  
 \textit{quantum family of morphisms}  instead of quantum family of maps between quantum spaces. 
 
Given   $\C^*$-algebras $A_1,A_2$ and $B$, a quantum family of morphisms from $A_1$ to $A_2$  is a morphism $\alpha\in\Mor(A_1,B\otimes A_2)$. For $B =\C_0(X)$, $\alpha$ yields a continuous family of morphisms indexed by the elements of $X$. Indeed,  denoting the character assigned to $x\in X$ by $\textrm{ev}_x:\C_0(X)\rightarrow \mathbb{C}$, we may view $\alpha$ as a continuous map 
\[X\ni x\mapsto \alpha_x\in\Mor(A_1,A_2)\] where 
\[\alpha_x = (\textrm{ev}_x\otimes\id_{A_2})\circ\alpha.\]
 For the description of topology on $\Mor(A_1,A_2)$ we refer to \cite{WorGen}.
 
Given  quantum family   of morphisms may   be consistent with  certain     structures on $A_1$ and $A_2$. For instance if  $A_1=A_2=A$ then      $\alpha\in\M(A,B\otimes A)$ is said to preserve a state $\omega:A\rightarrow\mathbb{C}$ if
\[(\id\otimes\omega )\circ\alpha(a) =\omega (a) \I_{B}.\]
In this paper we consider the case when $A_1$ and $A_2$ are $\C^*$-algebras assigned to locally compact quantum groups $\GG$ and $\HH$ and  we define a quantum family of homomorphisms from $\HH$ to $\GG$ as quantum family of morphisms satisfying further conditions. 

In order to explain our concept, let us consider a classical   family of homomorphisms from $\HH$ to $\GG$, i.e.  $\alpha\in\Mor(\C_0^u(\GG),\C_0(X)\otimes\C_0^u(\HH))$
such that
\begin{equation}\label{defal}(\alpha_x\otimes\alpha_x)\circ\Delta^u_{\GG} = \Delta^u_{\HH}\circ\alpha_x\end{equation} where $\C_0^u(\HH),\C_0^u(\GG)$ denotes  the universal $\C^*$-algebras assigned to $\HH,\GG$ respectively. Let  $\WW^G\in\M(\C_0^u(\hat\GG)\otimes\C_0^u(\GG))$ be the universal bicharacter of $\GG$ and let us consider  a unitary 
\begin{equation}\label{defU}U = (\id\otimes\alpha)(\WW^\GG)\in\M(\C^u_0(\hat\GG)\otimes \C_0(X)\otimes\C^u_0(\HH)).\end{equation} 
Then, using the {\it leg numbering notation},   \eqref{defal} is equivalent with the following identity for $U$
\begin{equation}\label{mcond}(\id_{\C_0^u(\hat\GG)}\otimes\id_{\C_0(X)}\otimes\Delta^u_{\HH})(U) = U_{123}U_{124}.\end{equation} 
This inspires us to  define  a quantum family of homomorphisms of quantum groups, as quantum family of morphisms 
\[\alpha\in\Mor(\C_0^u(\GG),B\otimes\C_0^u(\HH))\] such that, defining $U = (\id\otimes\alpha)(\WW^\GG)\in\M(\C_0^u(\hat\GG)\otimes B\otimes\C_0^u(\HH))$ we have
\[(\id_{\C_0^u(\hat\GG)}\otimes \id_B\otimes\Delta_{\C_0^u(\HH)})(U) = U_{123}U_{124}\]
where we used the  the leg numbering notation. 
We prove that    a quantum family $\alpha$ of homomorphisms of quantum groups satisfies 
\[\alpha(x)_{12} \alpha(y)_{13} =\alpha(y)_{13} \alpha(x)_{12}\] for all $x,y\in\C_0^u(\GG)$ (see Theorem \ref{thm0}). A special   case $\HH = \GG$ of this result   was already obtained in \cite{QAGC} and it led   to a proof  that a quantum group of automorphisms of a finite quantum group as defined in \cite{QAG} is  its classical group of automorphisms.

In the second part of the paper we move on to a purely algebraic context of the category \emph{Alg} of unital  algebras over a field $k$.  Let   $\mathcal{H}_1 = (\AM_1,\Delta_{\AM_1},S_{\AM_1}, \varepsilon_{\AM_1})$, $\mathcal{H}_2 = (\AM_2,\Delta_{\AM_2}, S_{\AM_2},  \varepsilon_{\AM_2})$ be  Hopf algebras and $\BM$ an algebra. The multiplication map on $\BM$ will be denoted $m_\BM:\BM\otimes\BM\rightarrow \BM$.  In order to formulate the definition of  a quantum family   $\alpha:\AM_1\rightarrow \BM\otimes\AM_2$  of homomorphisms, 
  let us consider an $n$-tuple $(\alpha_1,\alpha_2,\ldots,\alpha_n)$ of Hopf algebras homomorphisms $\alpha_i:\AM_1\rightarrow  \AM_2$  where
\begin{equation}\label{condhom} \Delta_{\AM_2}\circ \alpha_i = (\alpha_i\otimes\alpha_i)\circ \Delta_{\AM_1}.\end{equation}
 Denoting the commutative algebra $k^n$ by $\BM$ and  identifying $\BM\otimes\AM_2\cong\AM_2^n$ we   may view the $n$-tuple $(\alpha_1(a),\alpha_2(a),\ldots,\alpha_n(a))$ as   
$\alpha:\AM_1\rightarrow\BM\otimes\AM_2$
\[\alpha(a) = (\alpha_1(a),\alpha_2(a),\ldots,\alpha_n(a)).\] The condition \eqref{condhom} reads
\begin{equation}\label{defining} (m_{\BM}\otimes\id\otimes\id)\circ(\id\otimes\sigma_{\AM_2\otimes\BM}\otimes\id)\circ(\alpha\otimes\alpha)\circ\Delta_{\AM_1}=(\id\otimes\Delta_{\AM_2})\circ\alpha\end{equation} where 
\[\sigma_{\AM_2,\BM}:\AM_2\otimes\BM\rightarrow\BM\otimes\AM_2\] is the flip homomorphism
\[\sigma_{\AM_2,\BM}(a\otimes b) = b\otimes a.\]  
Allowing $\BM$ to be an arbitrary algebra over $k$, we define a quantum family of Hopf algebra homomorphisms to be a unital homomorphism $\alpha:\AM_1\rightarrow \BM\otimes\AM_2$  satisfying \eqref{defining}.
Let us note that for  $\BM=k^n$   we  have 
\begin{equation}\label{eqcomm} \alpha(x)_{12} \alpha(y)_{13} =\alpha(y)_{13} \alpha(x)_{12} \end{equation} for all $x,y\in\AM_1$  
and 
 \begin{equation}\label{eqcons}
 \begin{split}
 (\id\otimes S_{\AM_2})(\alpha(a)) &= \alpha(S_{\AM_1}(a))\\
 (\id\otimes \varepsilon_{\AM_2})(\alpha(a)) &= \varepsilon_{\AM_1}(a)\I_{\BM}
 \end{split}
 \end{equation} for all $a\in\AM_1$. 
 Indeed, \eqref{eqcomm} follows from the commutativity of $k^n$ while \eqref{eqcons} is equivalent with 
 \[ \begin{split}
  S_{\AM_2}(\alpha_i(a)) &= \alpha_i(S_{\AM_1}(a))\\
 \varepsilon_{\AM_2}(\alpha_i(a)) &= \varepsilon_{\AM_1}(a)
 \end{split}\]
 for all $i\in\{1,2,\ldots,n\}$. 
We prove   that  \eqref{eqcomm} and \eqref{eqcons} hold for $\alpha$ satisfying \eqref{defining} with arbitrary $\BM$ (see Theorem \ref{mainthm}, Theorem \ref{counitqfh} and Theorem \ref{Spres}). 
Thus we answer  positively the question if   \eqref{eqcons} follows from \eqref{defining} which appeared in \cite{QAG}.

 The concept of quantum family of homomorphisms is closely related with Andruskiewitsch's {\it weak coaction} (see \cite{AD1}) which is explained in Remark \ref{weakcorem}. Since the latter was used for the  discussion of  adjoint coaction our results has some consequences in this context (see Section  \ref{exac}). As a side considerations  we introduce  the concept of cocentralizer of a given algebra homomorphism $\Phi:\AM\to\BM$ and we characterize it in terms of the adjoint coaction. 

The  results of this paper   have   certain similarities with the results obtained in \cite{Gos}. The latter concerns   a  Hopf algebra $\mathcal{Q}$ coacting on a unital commutative algebra $\AM$. The authors formulate some conditions    (phrased in terms of a bilinear form which is preserved under the coaction) which forces  the Hopf algebra $\mathcal{Q}$ to be commutative. Viewing the coaction as a quantum family of maps of $\AM$ the analogy of this result with the context of our paper is clear. 
%The  algebraic part of the paper   shows  that the ideas developed in the framework of locally compact quantum groups, where the multiplicative unitary and the %duality is available, may be  inspiring  in the purely   algebraic  context. An instance of this  inspiration is encountered  in the theory of multiplier Hopf %algebras, where the algebraic counterparts of the multiplicative unitary enters the definition of a multiplier Hopf algebra (see \cite{MHA}). 

\section{A quantum family of homomorphisms of locally compact quantum groups}
Let  $\mathcal{C}^*$  be the category of $\C^*$-algebras. The objects of $\mathcal{C}^*$ are $\C^*$-algebras 
and a morphism $\pi\in\Mor(A,B)$ is   a non-degenerate $*$-homomorphism: $\pi:A\rightarrow\M(B)$, $\pi(A)B = B$. The tensor product $A\otimes B$ is the  spatial one.  For the discussion of morphisms and tensor product  in the context of  $\mathcal{C}^*$-category  we refer to \cite{WorGen}. 

\begin{definition}
Let $A_1,A_2,B\in\Ob(\mC)$. A morphism $\alpha\in\Mor(A_1,B\otimes A_2)$ will be called a {\it quantum family of morphisms} from $A_1$ to $A_2$. 
\end{definition}

In what follows we shall introduce  and use certain elements of the theory of locally compact quantum groups (lcqg). 
For the axiomatic formulations of lcqg with the existence of Haar weights postulated  we refer  to \cite{KV} or \cite{mnw}. For the theory  with the multiplicative unitary playing the central role we refer to \cite{SolWor}. For the needs of   this paper the theory based on  multiplicative unitary  is sufficient. We shall freely use the leg numbering notation. 

Let $\GG$ be a locally compact quantum group and  $\hat\GG$ its dual. The  multiplicative unitary $\ww^\GG$ of $\GG$ may be viewed as  (the reduced)  bicharacter $\ww^\GG\in \M(\C_0(\hat\GG)\otimes\C_0(\GG))$.  
\[
\begin{split}
(\id\otimes\Delta_\GG)(\ww^\GG) &= \ww^\GG_{12}\ww^\GG_{13}\\
( \Delta_{\hat\GG}\otimes\id)(\ww^\GG) &= \ww^\GG_{23}\ww^\GG_{13}.
\end{split}
\]
Let $B$ be a $\C^*$-algebra and $\pi_1,\pi_2\in\Mor(\C_0(\GG),B)$.
If
\[(\id\otimes\pi_1)(\ww^\GG) = (\id\otimes\pi_2)(\ww^\GG)\] then $\pi_1 = \pi_2$ which easily follows from 
\[\C_0(\GG) =  \{(\omega\otimes\id)(\ww^\GG):\omega\in\B(L^2(\GG))_*\}^{-\|\cdot\|}.\] 

 One assigns with $\GG$ and $\hat\GG$ their universal versions. For the constructions of the universal version within the   multiplicative unitary framework  see \cite{SolWor}.   Thus one constructs $\C^u_0(\GG), \C^u_0(\hat\GG)$, the reducing morphisms     $\Lambda_\GG\in\Mor(\C_0^u(\GG),\C_0(\GG))$, $\Lambda_{\hat\GG}\in\Mor(\C_0^u(\hat\GG),\C_0(\hat\GG))$ and comultiplications   \[\begin{split}\Delta^u_{\GG}&\in\Mor(\C_0^u(\GG),\C_0^u(\GG)\otimes\C_0^u(\GG))\\\Delta^u_{\hat\GG}&\in\Mor(\C_0^u(\hat\GG),\C_0^u(\hat\GG)\otimes\C_0^u(\hat\GG)).\end{split}\] It turns out that $\ww^\GG$ lifts to the universal bicharacter $\WW^\GG\in\M(\C^u_0(\hat\GG)\otimes\C^u_0(\GG))$  (see e.g. \cite{SLW12}), i.e. 
 \begin{equation}\label{biu}\begin{split}(\id\otimes\Delta_{\GG}^u)(\WW^\GG) &= \WW^\GG_{12}\WW^\GG_{13}\\
 (\Delta_{\hat\GG}^u\otimes\id)(\WW^\GG) &= \WW^\GG_{23}\WW^\GG_{13}\end{split}\end{equation}  and
 \[\ww^\GG = (\Lambda_{\hat\GG}\otimes\Lambda_\GG)(\WW^\GG).\] 
 As in the reduced case, if   $\pi_1,\pi_2\in\Mor(\C^u_0(\GG),B)$ satisfies
 \begin{equation}\label{pi}(\id\otimes\pi_1)(\WW^\GG) = (\id\otimes\pi_2)(\WW^\GG)\end{equation} then $\pi_1=\pi_2$ which is the consequence of the equality
 \begin{equation}\label{gen}
 \C_0^u(\GG) = \{(\omega\circ\Lambda_{\hat\GG}\otimes\id_{\C_0^u(\GG)})(\WW^\GG):\omega\in\B(L^2(\GG))_*\}^{-\|\cdot\|}.
 \end{equation}

Let $\HH$ and $\GG$ be locally compact quantum groups and let  $\alpha\in\Mor(\C_0^u(\GG),\C_0^u(\HH))$. We say that $\alpha$ is a homomorphism from $\HH$ to $\GG$ if 
\[(\alpha\otimes\alpha)\circ\Delta^u_{\GG} = \Delta^u_{\HH}\circ\alpha.\] 
A morphism  $\alpha\in\Mor(\C_0^u(\GG),\C_0^u(\HH))$ may be assigned  with a unitary 
\[U = (\id_{\C_0^u(\hat\GG)}\otimes\alpha)(\WW^\GG)\in\M(\C_0^u(\hat\GG)\otimes\C_0^u(\HH)).\]
Clearly,  $\alpha$ is a homomorphism from $\HH$ to $\GG$ if and only if 
\[(\id\otimes\Delta_{\HH}^u)(U) = U_{12}U_{13}.\]
 \begin{definition}
 Let $B$ be a $\C^*$-algebra and  $\HH$, $\GG$   locally compact quantum groups. Let   $\alpha\in\Mor(\C_0^u(\GG),B\otimes\C_0^u(\HH))$ and let us  define  $U = (\id_{\C^u_0(\hat\GG)}\otimes\alpha)(\WW^\GG)\in\M(\C^u_0(\hat\GG)\otimes B\otimes\C_0^u(\HH))$.  We say that $\alpha$  is a {\it quantum family of homomorphisms} from $\HH$ to $\GG$ if   $U$ satisfies 
 \begin{equation}\label{defU1}(\id_{\C^u_0(\hat\GG}\otimes\id_B\otimes\Delta^u_{\HH})(U) = U_{123}U_{124}.\end{equation}
\end{definition}
Let us note that the second equation of  \eqref{biu} yields
\begin{equation}\label{propU}(\Delta^u_{ \hat\GG}\otimes\id_B\otimes\id_{\C^u_0(\HH)})(U) = U_{234}U_{134}.\end{equation}

\begin{theorem}\label{thm0}
 Let $\alpha\in\Mor(\C_0^u(\GG),B\otimes\C_0^u(\HH))$ be a quantum family of homomorphisms from $\HH$ to $\GG$.  Then 
 \[\alpha_{12}(x)\alpha_{13}(y) = \alpha_{13}(y)\alpha_{12}(x)\] for all $x,y\in\C_0^u(\GG)$.
\end{theorem}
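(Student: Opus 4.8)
The plan is to reduce the asserted commutation to a single leg-wise relation for the unitary $U$, and then to obtain that relation by evaluating one element in two ways. First I would invoke the generating property \eqref{gen}: every element of $\C_0^u(\GG)$ is a norm limit of elements of the form $x=(\mu\otimes\id)(\WW^\GG)$ with $\mu=\omega\circ\Lambda_{\hat\GG}$, $\omega\in\B(L^2(\GG))_*$. For such $x$ one has $\alpha(x)=(\mu\otimes\id_B\otimes\id_{\C_0^u(\HH)})(U)$, since $\mu$ acts on the $\C_0^u(\hat\GG)$-leg and therefore commutes with $\alpha$ acting on the $\C_0^u(\GG)$-leg of $\WW^\GG$. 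Writing $x=(\mu\otimes\id)(\WW^\GG)$ and $y=(\nu\otimes\id)(\WW^\GG)$ and carrying out a standard leg-numbering computation in $\M(\C_0^u(\hat\GG)\otimes\C_0^u(\hat\GG)\otimes B\otimes\C_0^u(\HH)\otimes\C_0^u(\HH))$ (legs labelled $\hat\GG,\hat\GG,B,\HH,\HH$) one checks
\[\alpha_{12}(x)\alpha_{13}(y)=(\mu\otimes\nu\otimes\id\otimes\id\otimes\id)(U_{134}U_{235}),\qquad \alpha_{13}(y)\alpha_{12}(x)=(\mu\otimes\nu\otimes\id\otimes\id\otimes\id)(U_{235}U_{134}).\]
Hence, by density of such $x,y$ together with norm-continuity of $\alpha$ and of multiplication, it suffices to prove the leg-wise commutation $U_{134}U_{235}=U_{235}U_{134}$ in that five-fold tensor product.

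To establish this, I would compute the single element $(\Delta^u_{\hat\GG}\otimes\id_B\otimes\Delta^u_{\HH})(U)$ in the same five-leg algebra in two different orders, using that $\Delta^u_{\hat\GG}$ and $\Delta^u_{\HH}$ act on disjoint legs and hence commute as maps. Splitting the $\HH$-leg first via \eqref{defU1} and then the $\hat\GG$-leg via \eqref{propU} turns $U_{123}U_{124}$ into $(U_{234}U_{134})(U_{235}U_{135})$; splitting the $\hat\GG$-leg first via \eqref{propU} and then the $\HH$-leg via \eqref{defU1} turns $U_{234}U_{134}$ into $(U_{234}U_{235})(U_{134}U_{135})$. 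Equating the two results gives
\[U_{234}U_{134}U_{235}U_{135}=U_{234}U_{235}U_{134}U_{135}.\]

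Since $U$ is the image of the unitary $\WW^\GG$ under the morphism $\id\otimes\alpha$, it is itself unitary, so the factors $U_{234}$ and $U_{135}$ are invertible; cancelling $U_{234}$ on the left and $U_{135}$ on the right yields exactly $U_{134}U_{235}=U_{235}U_{134}$. Combined with the first paragraph, slicing the two $\hat\GG$-legs with $\mu,\nu$ then gives the commutation $\alpha_{12}(x)\alpha_{13}(y)=\alpha_{13}(y)\alpha_{12}(x)$ for $x,y$ in a dense subset of $\C_0^u(\GG)$, and density finishes the proof. The conceptual core—and the only genuine obstacle—is recognising that $(\Delta^u_{\hat\GG}\otimes\id_B\otimes\Delta^u_{\HH})(U)$ admits two distinct factorizations; once this is set up, the result is extracted purely by cancelling unitaries. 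The remaining care is bookkeeping: one must track how the duplicated legs produced by \eqref{defU1} and \eqref{propU} land in the intended slots when these identities are applied to the individual factors $U_{123},U_{124}$ (respectively $U_{234},U_{134}$).
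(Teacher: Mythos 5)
Your proposal is correct and follows essentially the same route as the paper: both evaluate $(\Delta^u_{\hat\GG}\otimes\id_B\otimes\Delta^u_{\HH})(U)$ in two orders using \eqref{defU1} and \eqref{propU}, cancel the outer unitary factors to obtain $U_{134}U_{235}=U_{235}U_{134}$, and then slice the two $\C_0^u(\hat\GG)$-legs with functionals, using \eqref{gen}. The only difference is presentational (you reduce to the leg-wise commutation first and spell out the slicing step in more detail), not mathematical.
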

\begin{proof}
The morphism \[\Delta_{\hat\GG}^u\otimes\id_B\otimes\Delta_{\HH}^u\in\Mor(\C_0^u(\hat\GG)\otimes B\otimes\C_0^u(\HH),\C_0^u(\hat\GG)\otimes\C_0^u(\hat\GG)\otimes B\otimes\C_0^u(\HH)\otimes\C_0^u(\HH) )\] may be written  in two ways 
\[
\begin{split}
\Delta_{\hat\GG}^u\otimes\id_B\otimes\Delta_{\HH}^u & = (\Delta_{\hat\GG}^u\otimes\id_B\otimes\id_{\C_0^u(\HH)}\otimes\id_{\C_0^u(\HH)}) \circ(\id_{\C_0^u(\hat\GG)}\otimes\id_B\otimes\Delta^u_{\HH})\\
\Delta_{\hat\GG}^u\otimes\id\otimes\Delta_{\HH}^u & =(\id_{\C_0^u(\hat\GG)}\otimes\id_{\C_0^u(\hat\GG)}\otimes\id_B\otimes\Delta^u_{\HH})\circ (\Delta_{\hat\GG}^u\otimes\id_B\otimes\id_{\C_0^u(\HH)}).
\end{split}
\]  Applying these two forms of $\Delta_{\hat\GG}^u\otimes\id_B\otimes\Delta^u_{\HH}$ to   $U\in\M(\C^u_0(\hat\GG)\otimes B\otimes\C_0^u(\HH)) $  and using \eqref{defU1} and \eqref{propU} we get 
\[(\Delta^u_{\hat\GG}\otimes\id_B\otimes \Delta_{\HH}^u)(U) = U_{234}U_{134}U_{235}U_{135}\] 
and 
\[(\Delta^u_{\hat\GG}\otimes\id_B\otimes\Delta_{\HH}^u)(U) =  U_{234}U_{235}U_{134}U_{135}.\]
In particular

\begin{equation}\label{coU}U_{134}U_{235} = U_{235}U_{134}\end{equation} Let $\mu,\nu\in \C_0^u(\hat\GG)^*$. Applying   $\mu\otimes\nu \otimes\id_B\otimes\id_{\C_0^u(\HH)}\otimes\id_{\C_0^u(\HH)}$ to  \eqref{coU} and using  \eqref{gen} we get  
\[\alpha(x)_{12}\alpha(y)_{13} = \alpha(y)_{13}\alpha(x)_{12}\]
for all $x,y\in\C_0^u(\GG)$. 
\end{proof}

%Let $\GG$ be a locally compact quantum group, $\varepsilon_\GG:\C^u_0(\GG)\rightarrow\mathbb{C}$ a counit of $\GG$ 
%\begin{equation}\label{epdef}(\id_{\C_0^u(\GG)}\otimes\varepsilon_\GG)\circ \Delta^u_{\GG} = \id_{\C_0^u(\GG)}\end{equation}
%Let $C$ be a $\C^*$-algebra and $V\in\M(C\otimes\C_0^u(\GG))$ a unitary representation\[(\id_C\otimes\Delta^u_{\GG})(V) = V_{12}V_{13}\] on a $\C^*$-algebra $C$.  
%It is easy to see that \eqref{epdef}  implies that $(\id\otimes\varepsilon_\GG)(V) =\I_C$.
%In particular if $\alpha\in\Mor(\C_0^u(\GG),B\otimes\C_0^u(\HH))$ is a quantum family of homomorphisms from $\HH$ to $\GG$ then the unitary  
%\[U = (\id_{\C_0^u(\hat\GG)}\otimes\alpha)\WW^{\GG}\in\M(\C_0(\hat\GG)\otimes B\otimes\C_0(\HH))\] is a representation of $\HH^u$ (on $\C_0^u(\hat\GG)\otimes B$)  thus 
%\[(\id_{\C_0(\hat\GG)}\otimes\id_B\otimes\varepsilon_{\HH})U = \I_{\C_0(\hat\GG)}\otimes\I_B\] Using \eqref{pi} we get 
%\begin{equation}\label{epres}(\id_B\otimes\varepsilon_\HH)\circ\alpha(a) = \varepsilon_{\GG}(a)\I_B\end{equation}
\subsection{Adjoint coaction}
Let $\GG$ be a locally compact quantum group and let us denote the universal bicharacter of $\GG$ by   $\WW^\GG\in\M(\C_0^u(\hat\GG)\otimes\C_0^u(\GG))$. Let us consider 
a quantum family of maps $\alpha\in\Mor(\C_0^u(\GG),\C_0^u(\hat\GG)\otimes\C_0^u(\GG))$ given by 
\[\alpha(x) = \WW^\GG(\I_{\C_0^u(\hat\GG)}\otimes x)\WW^{\GG*}\] for all $x\in\C_0^u(\GG)$. 
\begin{theorem}\label{examp}
A quantum family of maps $\alpha\in\Mor(\C_0^u(\GG),\C_0^u(\hat\GG)\otimes\C_0^u(\GG))$ is a quantum family of homomorphisms from $\GG$ to $\GG$ if and only if for all $x\in\C_0^u(\GG)$ and $y\in\C_0^u(\hat\GG)$ we have 
\[\alpha(x)(y\otimes\I) = (y\otimes\I)\alpha(x).\]
\end{theorem}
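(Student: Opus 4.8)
The plan is to use the explicit form of the adjoint coaction under consideration, $\alpha(x) = \WW^\GG(\I \otimes x)\WW^{\GG*}$, to bring the associated unitary $U$ into closed form, and then to reduce the defining condition \eqref{defU1} and the commutation condition to one and the same family of sliced relations, thereby obtaining the equivalence in both directions at once. Writing $\WW = \WW^\GG$ and working on the three legs $\C_0^u(\hat\GG)_1 \otimes \C_0^u(\hat\GG)_2 \otimes \C_0^u(\GG)_3$, the first step is to expand $U = (\id_{\C_0^u(\hat\GG)} \otimes \alpha)(\WW)$: since the inner copy of $\WW$ produced by $\alpha$ sits in legs $2,3$ and commutes past the first leg, one gets
\[U = \WW_{23}\WW_{13}\WW_{23}^*.\]

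Next I would write out \eqref{defU1}, namely $(\id \otimes \id \otimes \Delta^u_\GG)(U) = U_{123}U_{124}$, on the four legs $\C_0^u(\hat\GG)_1 \otimes \C_0^u(\hat\GG)_2 \otimes \C_0^u(\GG)_3 \otimes \C_0^u(\GG)_4$. Applying the first identity of \eqref{biu} to each factor of $U$ gives the left-hand side $\WW_{23}\WW_{24}\WW_{13}\WW_{14}\WW_{24}^*\WW_{23}^*$, whereas the right-hand side is $(\WW_{23}\WW_{13}\WW_{23}^*)(\WW_{24}\WW_{14}\WW_{24}^*)$. Since legs $1,3$ are disjoint from legs $2,4$, $\WW_{13}$ commutes with $\WW_{24}$, and the left-hand side rearranges to $\WW_{23}\WW_{13}(\WW_{24}\WW_{14}\WW_{24}^*)\WW_{23}^*$. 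Cancelling the common unitary factor $\WW_{23}\WW_{13}$ on the left then shows that \eqref{defU1} is \emph{equivalent} to
\[U_{124}\,\WW_{23}^* = \WW_{23}^*\,U_{124}, \qquad U_{124} = \WW_{24}\WW_{14}\WW_{24}^*.\]

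The heart of the argument is to slice this identity down to the commutation relation in two steps, tracking which leg is probed. Leg $1$ enters $U_{124}$ only through $\WW_{14}$, so applying $\mu$ in leg $1$, for $\mu \in \C_0^u(\hat\GG)^*$, replaces $U_{124}$ by $\WW_{24}(a_\mu)_4\WW_{24}^* = \alpha(a_\mu)_{24}$ with $a_\mu = (\mu \otimes \id)(\WW)$, turning the relation into $\alpha(a_\mu)_{24}\WW_{23}^* = \WW_{23}^*\alpha(a_\mu)_{24}$. By the generation property \eqref{gen} the $a_\mu$ are dense in $\C_0^u(\GG)$, so this is equivalent to $\alpha(x)_{24}\WW_{23}^* = \WW_{23}^*\alpha(x)_{24}$ for all $x$. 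Leg $3$ now enters only through $\WW_{23}^*$, so applying $\nu$ in leg $3$, for $\nu \in \C_0^u(\GG)^*$, replaces $\WW_{23}^*$ by $(c_\nu)_2$ with $c_\nu = (\id \otimes \nu)(\WW^*) \in \C_0^u(\hat\GG)$, and yields $\alpha(x)(c_\nu \otimes \I) = (c_\nu \otimes \I)\alpha(x)$ in legs $(2,4)$. The $c_\nu$ are the adjoints of the slices $(\id \otimes \bar\nu)(\WW)$, which are dense in $\C_0^u(\hat\GG)$ by the analogue of \eqref{gen} with $\GG$ and $\hat\GG$ interchanged; hence the $c_\nu$ are dense too, and the last family of relations is equivalent to $\alpha(x)(y \otimes \I) = (y \otimes \I)\alpha(x)$ for all $x \in \C_0^u(\GG)$ and $y \in \C_0^u(\hat\GG)$.

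I expect the main obstacle to be ensuring that each of the two slicing passages is a genuine equivalence and not merely an implication. This rests on two facts: that the slices of $\WW$ (respectively $\WW^*$) are norm-dense in $\C_0^u(\GG)$ (respectively $\C_0^u(\hat\GG)$), and that slice functionals separate points of the multiplier algebra, so that an equality of multipliers holds precisely when all its slices in the probed leg coincide. Granting these, the chain of equivalences reads in both directions, and the algebraic reduction of \eqref{defU1} to the commutation of $U_{124}$ with $\WW_{23}^*$ is routine once the closed form $U = \WW_{23}\WW_{13}\WW_{23}^*$ is in hand.
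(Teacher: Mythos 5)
Your proof is correct and takes essentially the same route as the paper's: the closed form $U = \WW^\GG_{23}\WW^\GG_{13}\WW^{\GG*}_{23}$, the expansion of both sides of \eqref{defU1} via \eqref{biu}, the commutation of $\WW_{13}$ with $\WW_{24}$, cancellation of the unitary factor $\WW_{23}\WW_{13}$, and then slicing against functionals to pass between the reduced relation and the stated commutation. The only difference is that you make explicit the slicing step, which the paper compresses into ``reasoning as at the end of the proof of Theorem \ref{thm0}'', and you correctly observe that, besides \eqref{gen}, it also needs the density of the second-leg slices of $\WW^\GG$ in $\C_0^u(\hat\GG)$ --- a point the paper uses implicitly.
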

\begin{proof}
Noting that $U =  (\id_{\C_0^u(\hat\GG)}\otimes\alpha)(\WW^\GG) = \WW^\GG_{23}\WW^\GG_{13}\WW_{23}^{\GG*}$ we compute 
\[\begin{split}
 (\id_{\C_0^u(\hat\GG)}\otimes\id_{\C_0^u(\hat\GG)}\otimes\Delta_{\GG}^u)(U) &= \WW^\GG_{23}\WW^\GG_{24}\WW^\GG_{13}\WW^\GG_{14}\WW_{24}^{\GG*}\WW_{23}^{\GG*}\\
 &=\WW^\GG_{23}\WW^\GG_{13}\WW^\GG_{24}\WW^\GG_{14}\WW_{24}^{\GG*}\WW_{23}^{\GG*}.
 \end{split}
\]
On the other hand 
\[U_{123}U_{124} = \WW^{\GG}_{23}\WW^{\GG}_{13}\WW_{23}^{\GG*}\WW^{\GG}_{24}\WW^{\GG}_{14}\WW_{24}^{\GG*}.\]
The equality $ (\id_{\C_0^u(\hat\GG)}\otimes\id_{\C_0^u(\hat\GG)}\otimes\Delta_{\GG}^u)(U) = U_{123}U_{124}$ yields
\[\WW^{\GG}_{24}\WW^{\GG}_{14}\WW_{24}^{\GG*}\WW_{23}^{\GG*} = \WW_{23}^{\GG*}\WW_{24}^{\GG}\WW_{14}^{\GG}\WW_{24}^{\GG*}\]
which we may write as 
\[\WW_{23}^{\GG}(\id\otimes\alpha)(\WW^{\GG})_{124} =(\id\otimes\alpha)(\WW^{\GG})_{124} \WW^{\GG}_{23}.\]
Reasoning as  at the end of the proof of Theorem \ref{thm0}, we get the desired result. 
\end{proof}
Let us note that 
\begin{itemize}
\item if $\C_0^u(\GG)$ is commutative then the action $\alpha$ is trivial and the condition of Theorem \ref{examp} holds;
\item  if $\C_0^u( \GG)$ is cocommutative (thus $\C_0^u(\hat\GG)$ is   commutative) then the condition of Theorem \ref{examp}  holds. 
\end{itemize}
 \section{Hopf algebraic context}\label{HR} 
Let $k$ be a field and \emph{Alg}  the category of unital algebras over $k$. The tensor product of $\BM,\CM\in\Ob (\emph{Alg})$ is denoted $\BM\otimes\CM$ and the flip morphism by
$\sigma_{\BM,\CM}:\BM\otimes\CM\rightarrow\CM\otimes\BM$.  The multiplication map of $\BM$ is denoted $m_\BM:\BM\otimes\BM\rightarrow\BM$; $m_{\BM}$   is  a morphism if and only if $\BM$ is commutative.

A Hopf algebra $\mathcal{H}$ is a quadruple $\mathcal{H} = (\AM,\Delta_{\AM},\varepsilon_{\AM},S_{\AM})$ satisfying the standard system of axioms (see \cite{Sweedler}). In what follows we shall use Sweedler notation 
\[\Delta_{\AM_1}(a) = a_{(1)}\otimes a_{(2)}\] when convenient. Using this notation we have for instance 
\begin{equation}\label{basicpr}
\begin{split}
a_{(1)}S_{\AM}(a_{(2)}) &= S_{\AM}(a_{(1)})a_{(2)} = \varepsilon_{\AM}(a)\I_{\AM}\\
a_{(1)} \varepsilon_{\AM}(a_{(2)}) & =  \varepsilon_{\AM}(a_{(1)})a_{(2)} = a
\end{split}
\end{equation}
for all $a\in \AM$.

 Given a $k$-vector space $\mathcal V$, $\mathcal{L}(\mathcal{V})$ denotes the algebra of endomorphism of $\mathcal{V}$. 
For the later needs we formulate the following simple lemma.
\begin{lemma}\label{easy}
Let $\mathcal{H}$ be a Hopf algebra, $\mathcal{V}$   a vector space and $T_1,T_2:\AM\rightarrow\mathcal{V}$ the linear maps such that 
\[(\id_{\AM}\otimes T_1)\circ\Delta_{\AM} = (\id_{\AM}\otimes T_2)\circ\Delta_{\AM}.\] Then $T_1=T_2$. Similarly, \[(T_1\otimes\id_{\AM})\circ\Delta_{\AM} = (T_2\otimes\id_{\AM})\circ\Delta_{\AM}\] implies $T_1=T_2$. 
\end{lemma}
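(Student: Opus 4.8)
The plan is to use the counit $\varepsilon_{\AM}$ of the Hopf algebra to "collapse" one of the two copies of $\AM$ appearing in $\Delta_{\AM}$, exploiting the counit axiom \eqref{basicpr}. Concretely, for the first statement, I would apply the map $\id_{\mathcal{V}}\otimes\varepsilon_{\AM}$ (or rather compose on the appropriate leg) to both sides of the hypothesis, and then invoke the counit property $\varepsilon_{\AM}(a_{(1)})a_{(2)} = a$ to recover $T_i$ from $(\id_{\AM}\otimes T_i)\circ\Delta_{\AM}$.

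First I would work in Sweedler notation. The hypothesis $(\id_{\AM}\otimes T_1)\circ\Delta_{\AM} = (\id_{\AM}\otimes T_2)\circ\Delta_{\AM}$ reads, applied to an arbitrary $a\in\AM$,
\[a_{(1)}\otimes T_1(a_{(2)}) = a_{(1)}\otimes T_2(a_{(2)}) \quad\text{in } \AM\otimes\mathcal{V}.\]
Next I would apply the linear map $\varepsilon_{\AM}\otimes\id_{\mathcal{V}}:\AM\otimes\mathcal{V}\to k\otimes\mathcal{V}\cong\mathcal{V}$ to both sides. The left-hand side becomes $\varepsilon_{\AM}(a_{(1)})\,T_1(a_{(2)}) = T_1\bigl(\varepsilon_{\AM}(a_{(1)})a_{(2)}\bigr)$ by linearity of $T_1$, which equals $T_1(a)$ by the counit identity \eqref{basicpr}; the right-hand side becomes $T_2(a)$ in the same way. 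Hence $T_1(a)=T_2(a)$ for all $a$, giving $T_1=T_2$.

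For the second statement the argument is symmetric: starting from $a_{(1)}\otimes\cdots$ replaced by the left-leg version $T_i(a_{(1)})\otimes a_{(2)}$, I would instead apply $\id_{\mathcal{V}}\otimes\varepsilon_{\AM}$ and use the other counit identity $a_{(1)}\varepsilon_{\AM}(a_{(2)})=a$ from \eqref{basicpr}.

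There is no real obstacle here; the only point requiring a little care is the bookkeeping of which leg carries $\varepsilon_{\AM}$ and the use of linearity of $T_i$ to pull the scalar $\varepsilon_{\AM}(a_{(\cdot)})$ inside $T_i$. Since $\varepsilon_{\AM}$ is merely a linear functional and $T_i$ is only assumed linear (not multiplicative), applying $\varepsilon_{\AM}$ on the $\AM$-leg is a legitimate linear operation on the tensor product, and the counit axiom does the rest. The lemma is genuinely elementary and serves only to justify later uniqueness-type arguments.
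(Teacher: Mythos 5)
Your proof is correct and is essentially identical to the paper's: the paper also applies $\varepsilon_{\AM}$ on the $\AM$-leg, writing $T_i(a) = (\varepsilon_{\AM}\otimes T_i)\circ\Delta_{\AM}(a)$ and concluding $T_1=T_2$ from the hypothesis. Your Sweedler-notation bookkeeping just spells out the same one-line argument in more detail.
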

\begin{proof}
For all $a\in\AM$ we have 
\[T_1(a) = (\varepsilon_{\AM}\otimes T_1)\circ\Delta_{\AM}(a) =  (\varepsilon_{\AM}\otimes T_2)\circ\Delta_{\AM}(a) = T_2(a) \] thus $T_1=T_2$.
\end{proof}
The motivation of the following definition was given in the Introduction. 
\begin{definition}\label{qfh}
Let $\BM$ be an algebra, $\mathcal{H}_1 =  (\AM_1,\Delta_{\AM_1},S_{\AM_1}, \varepsilon_{\AM_1})$,  $\mathcal{H}_2= (\AM_2,\Delta_{\AM_2}, ,S_{\AM_2}, \varepsilon_{\AM_2})$  Hopf algebras  and let $\alpha:\AM_1\rightarrow\BM\otimes \AM_2 $ be a morphism. We say that $\alpha$ is a {\it quantum family of homomorphisms} from $\mathcal{H}_1$ to $\mathcal{H}_2$  if 
\begin{equation}\label{eqqfh} (m_{\BM}\otimes\id\otimes\id)\circ(\id\otimes\sigma_{\AM_2\otimes\BM}\otimes\id)\circ(\alpha\otimes\alpha)\circ\Delta_{\AM_1}=(\id\otimes\Delta_{\AM_2})\circ\alpha.\end{equation}
\end{definition}
Let us note that in the Sweedler notation, the condition of Definition \ref{qfh} reads 
\begin{equation}\label{swcond}\alpha(a_{(1)})_{12}\alpha(a_{(2)})_{13} = (\id\otimes\Delta_{\AM_2})\circ\alpha(a)\end{equation}
for all $a\in \AM_1$.
\begin{remark}\label{weakcorem}
A similar concept was considered in the beginning of Paragraph 2.3 of \cite{AD1} in the following context.  Let $(\AM,\Delta_{\AM}, \varepsilon_{\AM})$ be a coalgebra and $(\BM,\Delta_{\BM},S_{\BM}, \varepsilon_{\BM})$ a Hopf algebra. Let $m_{24}:\AM\otimes\BM\otimes\AM\otimes\BM$  be the linear map $c\otimes h \otimes d\otimes k\mapsto c  \otimes d\otimes hk$. A linear map $\rho: \AM\to\AM\otimes \BM$ is called a weak coaction if it satisfies the following conditions 
\begin{equation}\label{weakcoact}
\begin{split}
(\Delta_\AM\otimes\id)\circ\rho &= m_{24}\circ(\rho\otimes\rho)\circ\Delta_\AM \\
(\varepsilon_{\AM}\otimes\id)\circ\rho &= \varepsilon_{\AM}\otimes \I\\
(\id\otimes \varepsilon_{\BM})\circ\rho &= \id_\AM.
\end{split}
\end{equation}
The first from the above conditions has a form which up to the flip of $\BM\otimes \AM$ is the same as the condition \eqref{eqqfh} (with $\AM_1=\AM_2=\AM$). Let us note that from the Hopf algebra structure of $\BM$ only the counit $\varepsilon_{\BM} $ is used in \eqref{weakcoact}. We shall see that the analog of the second condition of \eqref{weakcoact} is automatically satisfied for a quantum family of homomorphisms (see Theorem \ref{counitqfh}).  
\end{remark}
Let $\mathcal{H} = (\AM,\Delta_{\AM},S_{\AM}, \varepsilon_{\AM})$ be a Hopf algebra. 
The algebraic counterpart of the multiplicative unitary  is  the invertible linear map   $W^\mathcal{H}:\AM \otimes \AM \rightarrow\AM \otimes \AM $ such that 
 \[W^\mathcal{H} (a\otimes a') = \Delta_{\AM_1}(a)(\I\otimes a') = a_{(1)}\otimes a_{(2)}a'.\]
The inverse of $W^{\mathcal{H}}$  is  given by  
 \[(W^{\mathcal{H}})^{-1} (a\otimes a') = ((\id\otimes S_{\AM})\Delta_{\AM}(a))(\I\otimes a')=a_{(1)}\otimes S_{\AM}(a_{(2)})a'.\]  
The map  $W\in\mathcal{L}(\AM\otimes\AM)$  yields $W_{12},W_{13},W_{23}\in \mathcal{L}(\mathcal{A} \otimes\mathcal{A} \otimes\mathcal{A})$, e.g. 
  \[W_{12}(a\otimes b\otimes c) = a_{(1)}\otimes a_{(2)}b\otimes c.\] 
 Let us note that 
 \[W^{\mathcal{H}}_{23}W^{\mathcal{H}}_{12}(W^{\mathcal{H}}_{23})^{-1} = W_{12}W_{13}.\] 
Indeed, using the Sweedler notation we get 
 \[W^{\mathcal{H}}_{23}W^{\mathcal{H}}_{12}(a\otimes b\otimes c)  = (a_{(1)}\otimes a_{(2)}\otimes a_{(3)})(\I\otimes b_{(1)}\otimes b_{(2)} c) = W^{\mathcal{H}}_{12}W^{\mathcal{H}}_{13}W^{\mathcal{H}}_{23}(a\otimes b\otimes c)\] 
 for all $a,b,c\in\AM$. 
 
 \begin{theorem}
Let $\mathcal{H}_1  $, $\mathcal{H}_2$ be   Hopf algebras, $\mathcal{B}$  a unital algebra and $\alpha:\AM_1\rightarrow\mathcal{B}\otimes \AM_2$   a morphism.  
Let  us consider a linear map  $U:\AM_1\otimes\mathcal{B}\otimes \AM_2\rightarrow \AM_1\otimes\mathcal{B}\otimes \AM_2$ such that 
\[U(a\otimes b\otimes a') = (\id\otimes\alpha)(\Delta_{\AM_1}(a))(\I\otimes b\otimes a')\]
for all $a\in \AM_1,b\in \BM,a'\in \AM_2$. 
Then $U$ is invertible,  
\[U^{-1}(a\otimes b\otimes a') = (\id\otimes\alpha)((\id\otimes S_{\AM_1})\Delta_{\AM_1}(a))(\I\otimes b\otimes a')\]
and it satisfies 
\begin{equation}\label{simeq}(W^{\mathcal{H}_1}_{12})^{-1}U_{234} W^{\mathcal{H}_1}_{12}  = U_{134} U_{234}.\end{equation}
Moreover, $\alpha$ is a quantum family of homomorphisms from $\mathcal{H}_1$ to $\mathcal{H}_2$ if and only if 
 \begin{equation}\label{alchar}W^{\mathcal{H}_2}_{34}U_{123}(W^{\mathcal{H}_2}_{34})^{-1}  = U_{123}U_{124}.
\end{equation}
\end{theorem}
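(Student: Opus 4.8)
The plan is to establish the three assertions—invertibility of $U$ with the stated inverse, the intertwining relation \eqref{simeq}, and the characterisation \eqref{alchar}—by direct computations in Sweedler notation, viewing $U$ as the evident $\alpha$-twist of the algebraic multiplicative unitary $W^{\mathcal{H}_1}$ (the second leg of $W^{\mathcal{H}_1}$ being fed through $\alpha$). Throughout I would use that $\alpha$ is a \emph{unital} algebra homomorphism, coassociativity of $\Delta_{\AM_1}$ and $\Delta_{\AM_2}$, and the antipode/counit identities \eqref{basicpr}.

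For invertibility I would set $V(a\otimes b\otimes a') = (\id\otimes\alpha)\big((\id\otimes S_{\AM_1})\Delta_{\AM_1}(a)\big)(\I\otimes b\otimes a')$ and verify $UV=VU=\id$ by direct expansion. Applying $U$ after $V$ and passing to the threefold coproduct $a_{(1)}\otimes a_{(2)}\otimes a_{(3)}$, the homomorphism property turns the relevant product into $\alpha\big(a_{(2)}S_{\AM_1}(a_{(3)})\big)$; the antipode identity $a_{(1)}S_{\AM_1}(a_{(2)})=\varepsilon_{\AM_1}(a)\I$ collapses the inner legs to a scalar, and unitality of $\alpha$ together with the counit axiom returns $a\otimes b\otimes a'$. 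The check of $VU$ is identical, using instead $S_{\AM_1}(a_{(1)})a_{(2)}=\varepsilon_{\AM_1}(a)\I$.

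Relation \eqref{simeq} I would prove by evaluating both sides on $a\otimes a''\otimes b\otimes a'\in\AM_1\otimes\AM_1\otimes\BM\otimes\AM_2$. On the left, $W^{\mathcal{H}_1}_{12}$ produces $a_{(1)}\otimes a_{(2)}a''$ in the first two legs, $U_{234}$ comultiplies the second leg and applies $\alpha$, and $(W^{\mathcal{H}_1}_{12})^{-1}$ reintroduces an antipode; after passing to a fourfold coproduct of $a$, the consecutive pair $S_{\AM_1}(a_{(2)})a_{(3)}$ collapses via \eqref{basicpr}, leaving exactly $a_{(1)}\otimes a''_{(1)}\otimes\alpha(a_{(2)}a''_{(2)})(b\otimes a')$. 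The right-hand side $U_{134}U_{234}$ yields the same expression once $\alpha(a_{(2)})\alpha(a''_{(2)})=\alpha(a_{(2)}a''_{(2)})$ is used. Conceptually \eqref{simeq} is the algebraic avatar of the bicharacter identity \eqref{propU}, with conjugation by $W^{\mathcal{H}_1}_{12}$ playing the role of the dual comultiplication, but the direct check is the cleanest route.

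For the final equivalence I would evaluate \eqref{alchar} on a generic $a\otimes b\otimes a'\otimes a''\in\AM_1\otimes\BM\otimes\AM_2\otimes\AM_2$. Conjugating $U_{123}$ by $W^{\mathcal{H}_2}_{34}$ and using that $\Delta_{\AM_2}$ is multiplicative, the antipode legs introduced by $(W^{\mathcal{H}_2}_{34})^{-1}$ collapse through \eqref{basicpr}, so the left-hand side becomes $\sum a_{(1)}\otimes\big[(\id\otimes\Delta_{\AM_2})\alpha(a_{(2)})\big](b\otimes a'\otimes a'')$. Computing $U_{123}U_{124}$ and collecting the $\BM$-legs gives the right-hand side as $\sum a_{(1)}\otimes\big[\alpha(a_{(2)})_{12}\alpha(a_{(3)})_{13}\big](b\otimes a'\otimes a'')$. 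Specialising $b,a',a''$ to the respective units reduces \eqref{alchar} to $(\id\otimes H)\circ\Delta_{\AM_1}=(\id\otimes G)\circ\Delta_{\AM_1}$, where $H(c)=(\id\otimes\Delta_{\AM_2})\alpha(c)$ and $G(c)=\alpha(c_{(1)})_{12}\alpha(c_{(2)})_{13}$; Lemma \ref{easy} then strips the leading leg and identifies this with $G=H$, that is, with the Sweedler form \eqref{swcond} of the defining condition. Conversely, $G=H$ makes both sides of \eqref{alchar} coincide for all $b,a',a''$, so the two conditions are equivalent. I expect the main difficulty to be purely organisational—keeping the four tensor legs and their home algebras straight through the repeated coassociativity and antipode collapses—rather than any conceptual obstacle.
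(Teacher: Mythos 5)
Your proposal is correct and follows essentially the same route as the paper: the inverse is verified by the same Sweedler computation using multiplicativity of $\alpha$ and the antipode identity, \eqref{simeq} is checked by evaluating on simple tensors (the paper verifies the equivalent form $U_{234}W^{\mathcal{H}_1}_{12}=W^{\mathcal{H}_1}_{12}U_{134}U_{234}$ rather than the conjugated one), and the equivalence \eqref{alchar} is obtained exactly as you describe, by specialising to units and invoking Lemma \ref{easy}.
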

\begin{proof}
Let  $X:\AM_1\otimes\mathcal{B}\otimes \AM_2\rightarrow \AM_1\otimes\mathcal{B}\otimes \AM_2$ be a linear map  such that
\begin{equation}\label{defX} X(a\otimes b\otimes c) = (\id\otimes\alpha)((\id\otimes S_{\AM_1})\Delta_{\AM_1}(a))(\I\otimes b\otimes c)\end{equation} for all $a\in \AM_1,b\in \BM, c\in \AM_2$. 
Using the Sweedler notation, we compute
\[\begin{split}
UX (a\otimes b\otimes c)&=U((a_{(1)}\otimes \alpha(S_{\AM_1}(a_{(2)})))(\I_{\AM_1}\otimes b\otimes c)\\
&=(a_{(1)}\otimes\alpha(a_{(2)})\alpha(S_{\AM_1}(a_{(3)})))(\I_{\AM_1}\otimes b\otimes c)\\
&=(a_{(1)}\otimes\alpha(a_{(2)}S_{\AM_1}(a_{(3)})))(\I_{\AM_1}\otimes b\otimes c)\\
&=(a_{(1)}\otimes\alpha(\varepsilon (a_{(2)})\I_{\AM_1}))(\I_{\AM_1}\otimes b\otimes c)\\
&=(a_{(1)}\varepsilon (a_{(2)})\otimes b\otimes c\\
&=a\otimes b\otimes c
\end{split}\]
where in the third and the fifth equation we used \eqref{basicpr}.
This shows that   $UX =\id_{\AM_1\otimes\BM\otimes\AM_2}$. Similarly we  check that $XU = \id_{\AM_1\otimes\BM\otimes\AM_2}$.
 
For $a,b\in \AM_1$, $c\in\BM$ and $d\in\AM_2$ we have
\[\begin{split}
U_{234} W^{\mathcal{H}_1}_{12}(a\otimes b\otimes c\otimes d) &=U_{234} (a_{(1)}\otimes a_{(2)}b\otimes c\otimes d)\\
&=(a_{(1)}\otimes a_{(2)}\otimes\alpha(a_{(3)}))(\I\otimes b_{(1)}\otimes\alpha(b_{(2)}))(\I\otimes\I\otimes c\otimes d).
\end{split}
\]
On the other hand  
\[
\begin{split}
W^{\mathcal{H}_1}_{12}U_{134} U_{234}(a\otimes b\otimes c\otimes d) &=W^{\mathcal{H}_1}_{12}U_{134} (a\otimes b_{(1)}\otimes \alpha(b_{(2)}))(\I\otimes\I\otimes c\otimes d)\\&=
W^{\mathcal{H}_1}_{12}(a_{(1)}\otimes \I\otimes \alpha(a_{(2)}))(\I\otimes b_{(1)}\otimes \alpha(b_{(2)}))(\I\otimes \I\otimes c\otimes d)\\&=
(a_{(1)}\otimes a_{(2)}\otimes \alpha(a_{(3)}))(\I\otimes b_{(1)}\otimes \alpha(b_{(2)}))(\I\otimes \I\otimes c\otimes d)
\end{split}
\]
which proves \eqref{simeq}. 

 Let $a\in\AM_1$, $b\in\mathcal{B}$ and $c,d\in\AM_2$. We compute
\[\begin{split}
W^{\mathcal{H}_2}_{34}U_{123}(W^{\mathcal{H}_2}_{34})^{-1}(a\otimes b\otimes c \otimes d)&=W^{\mathcal{H}_2}_{34}U_{123}(a\otimes b\otimes c_{(1)}\otimes S_{\AM_2}(c_{(2)})d)\\
&=(a_{(1)}\otimes((\id\otimes\Delta_{\AM_2})(\alpha(a_{(2)}))))(\I\otimes b\otimes c\otimes d).
\end{split}\]
On the other hand we get
\[\begin{split}
U_{123}U_{124}(a\otimes b\otimes c\otimes d)&= U_{123} (a_{(1)}\otimes \alpha(a_{(2)})_{24})(\I\otimes b\otimes c\otimes d)\\&=(a_{(1)}\otimes\alpha(a_{(2)})_{23}\alpha(a_{(3)})_{24})(\I\otimes b\otimes c\otimes d).
\end{split}\]
If  $\alpha$ is a quantum family of homomorphism, then  we may replace the elements   $(\id\otimes\Delta_{\AM_2})(\alpha(a_{(2)}))$ with $ \alpha(a_{(2)})_{23}\alpha(a_{(3)})_{24}$ which yields  \eqref{alchar}. 

Conversely, if \eqref{alchar} holds then putting $b=\I_{\BM}, c=d=\I_{\AM_2}$ we get 
\[a_{(1)}\otimes\alpha(a_{(2)})_{23}\alpha(a_{(3)})_{24} = a_{(1)}\otimes (\id\otimes\Delta_{\AM_2})(\alpha(a_{(2)})).\] Using 
Lemma \ref{easy} we get $\alpha(a_{(1)})_{12}\alpha(a_{(2)})_{13} = (\id\otimes\Delta_{\AM_2})(\alpha(a))$ for all $a\in \AM_1$, i.e. $\alpha$ is a quantum family of  homomorphisms from $\mathcal{H}_1$ to $\mathcal{H}_2$. 
\end{proof}
 
\begin{theorem}\label{mainthm}
Let $\mathcal{H}_1  $, $\mathcal{H}_2$ be   Hopf algebras  and $\alpha:\AM_1\rightarrow\mathcal{B}\otimes \AM_2$  a quantum family of homomorphisms from  $\mathcal{H}_1$ to $\mathcal{H}_2$.    Then for any $a,a'\in \AM_1$ we have 
\[\alpha(a)_{12}\alpha(a')_{13} = \alpha(a')_{13}\alpha(a)_{12}.\] 
\end{theorem}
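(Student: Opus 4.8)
The plan is to transcribe the proof of Theorem \ref{thm0} into the algebraic setting, replacing the comultiplications $\Delta^u_{\hat\GG}$ and $\Delta^u_\HH$ by conjugations with $W^{\mathcal{H}_1}$ and $W^{\mathcal{H}_2}$, whose effect on $U$ is exactly what \eqref{simeq} and \eqref{alchar} record. I would work in the fivefold tensor product $\AM_1\otimes\AM_1\otimes\BM\otimes\AM_2\otimes\AM_2$, with legs labelled $1,\dots,5$, and follow the single operator $U_{234}$ (its $\AM_1$-leg at $2$, its $\BM$-leg at $3$, its $\AM_2$-leg at $4$). The identities \eqref{simeq} and \eqref{alchar} will be used, suitably re-indexed and read with the extra tensor leg treated as a spectator.

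First I form the double conjugation
\[\Theta = (W^{\mathcal{H}_1}_{12})^{-1} W^{\mathcal{H}_2}_{45}\, U_{234}\, (W^{\mathcal{H}_2}_{45})^{-1} W^{\mathcal{H}_1}_{12}.\]
Since $W^{\mathcal{H}_1}_{12}$ and $W^{\mathcal{H}_2}_{45}$ act on the disjoint leg sets $\{1,2\}$ and $\{4,5\}$, they commute, so $\Theta$ may be computed in either order. Conjugating first by $W^{\mathcal{H}_1}_{12}$, which by \eqref{simeq} doubles the $\AM_1$-leg $2$ into legs $1,2$ and turns $U_{234}$ into $U_{134}U_{234}$, and then by $W^{\mathcal{H}_2}_{45}$, which by \eqref{alchar} doubles the $\AM_2$-leg of each factor into legs $4,5$, I expect $\Theta = U_{134}U_{135}U_{234}U_{235}$. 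Computing in the opposite order (first $W^{\mathcal{H}_2}_{45}$ on $U_{234}$, giving $U_{234}U_{235}$, then $W^{\mathcal{H}_1}_{12}$ on each factor) gives $\Theta = U_{134}U_{234}U_{135}U_{235}$. As $U$ is invertible by the preceding theorem, so are $U_{134}$ and $U_{235}$; cancelling $U_{134}$ on the left and $U_{235}$ on the right yields
\[U_{135}U_{234} = U_{234}U_{135},\]
the algebraic analogue of \eqref{coU}.

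It then remains to strip off the two $\AM_1$-legs. I would evaluate both sides of the last identity on $a\otimes a'\otimes\I_{\BM}\otimes\I_{\AM_2}\otimes\I_{\AM_2}$ and apply $\varepsilon_{\AM_1}\otimes\varepsilon_{\AM_1}\otimes\id\otimes\id\otimes\id$. Writing $\alpha(x)=\sum x^{[1]}\otimes x^{[2]}$ and using $U(a\otimes b\otimes a')=a_{(1)}\otimes\alpha(a_{(2)})(b\otimes a')$, a direct Sweedler computation shows that the two sides yield the same expression except for the order of the two $\BM$-factors, with legs $1,2$ carrying $a_{(1)}$ and $a'_{(1)}$. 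The counit identity $\varepsilon_{\AM_1}(a_{(1)})a_{(2)}=a$ of \eqref{basicpr} (and likewise for $a'$) collapses these legs and leaves $\alpha(a)_{13}\alpha(a')_{12}=\alpha(a')_{12}\alpha(a)_{13}$ in $\BM\otimes\AM_2\otimes\AM_2$; renaming $a\leftrightarrow a'$ (equivalently, applying $\sigma$ to the two $\AM_2$-factors) gives the asserted $\alpha(a)_{12}\alpha(a')_{13}=\alpha(a')_{13}\alpha(a)_{12}$.

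The Sweedler bookkeeping of the last paragraph is routine. The step demanding the most care is the fiveleg manipulation: one must check that $W^{\mathcal{H}_1}_{12}$ and $W^{\mathcal{H}_2}_{45}$ genuinely commute, that \eqref{simeq} and \eqref{alchar} apply with the correct ordering once the additional tensor leg is a spectator, and that the two evaluations of $\Theta$ produce products differing precisely by the transposition of $U_{135}$ and $U_{234}$, so that exactly two invertible factors cancel.
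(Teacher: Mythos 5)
Your proposal is correct and follows essentially the same route as the paper's own proof: the double conjugation of $U_{234}$ by $W^{\mathcal{H}_1}_{12}$ and $W^{\mathcal{H}_2}_{45}$ computed in two orders, cancellation of $U_{134}$ and $U_{235}$ to get $U_{135}U_{234}=U_{234}U_{135}$, and then evaluation on $a\otimes a'\otimes\I\otimes\I\otimes\I$ followed by $\varepsilon_{\AM_1}\otimes\varepsilon_{\AM_1}\otimes\id\otimes\id\otimes\id$. The leg bookkeeping, including the final relabeling $a\leftrightarrow a'$, checks out.
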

\begin{proof}
Using  
 \begin{equation}\label{ident}
\begin{split}W^{\mathcal{H}_2}_{34}U_{123}(W^{\mathcal{H}_2}_{34})^{-1}& = U_{123}U_{124}\\
(W^{\mathcal{H}_1}_{12})^{-1}U_{234} W^{\mathcal{H}_1}_{12}& = U_{134} U_{234}\end{split}
\end{equation}
 we may see that 
 \[(W^{\mathcal{H}_1}_{12})^{-1}W^{\mathcal{H}_2}_{45}U_{234}(W^{\mathcal{H}_2}_{45})^{-1}W^{\mathcal{H}_1}_{12} = (W^{\mathcal{H}_1}_{12})^{-1}U_{234}U_{235}W^{\mathcal{H}_1}_{12} = U_{134}U_{234}U_{135}U_{235}.\]
 Since $(W^{\mathcal{H}_1}_{12})^{-1}W^{\mathcal{H}_2}_{45} =W^{\mathcal{H}_2}_{45}(W^{\mathcal{H}_1}_{12})^{-1} $ we get the same result computing 
 \[W^{\mathcal{H}_2}_{45}(W^{\mathcal{H}_1}_{12})^{-1}U_{234}W^{\mathcal{H}_1}_{12}(W^{\mathcal{H}_2}_{45})^{-1} = W^{\mathcal{H}_2}_{45}U_{134}U_{234}(W^{\mathcal{H}_2}_{45})^{-1} = U_{134}U_{135}U_{234}U_{235}.\]
 Using the invertibility of $U$  we conclude that 
\begin{equation}\label{coalU} U_{234}U_{135} = U_{135}U_{234}.\end{equation}
 Applying \eqref{coalU}  to  a simple tensor $y\otimes x\otimes \I_\BM \otimes \I_{\AM_2}\otimes \I_{\AM_2}$, with $x,y\in \AM_1$ we get  
 \begin{equation}\label{idcom}(\id\otimes\alpha)(\Delta_{\AM_1}(x))_{234}(\id\otimes\alpha)(\Delta_{\AM_1}(y))_{135}=(\id\otimes\alpha)(\Delta_{\AM_1}(y))_{135}(\id\otimes\alpha)(\Delta_{\AM_1}(x))_{234}.\end{equation}
Applying $(\varepsilon\otimes\varepsilon\otimes\id\otimes\id\otimes\id)$ to \eqref{idcom}  we conclude that
 \[\alpha(x)_{12}\alpha(y)_{13} = \alpha(y)_{13}\alpha(x)_{12}\] for all $x,y\in \AM_1$.
\end{proof}

\begin{theorem}\label{counitqfh}
Let $\mathcal{H}_1  $, $\mathcal{H}_2$ be   Hopf algebras and $\alpha:\AM_1\rightarrow\mathcal{B}\otimes \AM_2$  a quantum family of homomorphisms from $\mathcal{H}_1$ to $\mathcal{H}_2$.  Then
\begin{equation}\label{pser}(\id\otimes\varepsilon_{\AM_2})(\alpha(a)) = \varepsilon_{\AM_1}(a)\I_{\BM}\end{equation} for all $a\in \AM_1$.
\end{theorem}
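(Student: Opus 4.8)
The plan is to reduce the statement to a convolution-algebra identity for the single map $E := (\id_\BM\otimes\varepsilon_{\AM_2})\circ\alpha : \AM_1\to\BM$ and then to invert. First I would observe that $E$ is a unital algebra homomorphism: indeed $\alpha$ is a morphism of algebras by hypothesis, and $\id_\BM\otimes\varepsilon_{\AM_2}:\BM\otimes\AM_2\to\BM$ is an algebra homomorphism because $\varepsilon_{\AM_2}$ is one (a Hopf-algebra counit is an algebra map), so their composite $E$ is an algebra homomorphism as well; in particular $E(\I_{\AM_1})=\I_\BM$.

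The main computation is to apply $\id_\BM\otimes\varepsilon_{\AM_2}\otimes\varepsilon_{\AM_2}$ to the defining identity \eqref{swcond}, namely $\alpha(a_{(1)})_{12}\alpha(a_{(2)})_{13}=(\id\otimes\Delta_{\AM_2})(\alpha(a))$, viewed as an equation in $\BM\otimes\AM_2\otimes\AM_2$. On the right-hand side the two counits collapse the comultiplication via $(\varepsilon_{\AM_2}\otimes\varepsilon_{\AM_2})\circ\Delta_{\AM_2}=\varepsilon_{\AM_2}$, leaving $E(a)$. On the left-hand side, writing $\alpha$ in Sweedler-type notation, the second-leg counit acts only on the $\AM_2$-component of $\alpha(a_{(1)})_{12}$ and the third-leg counit only on that of $\alpha(a_{(2)})_{13}$, so the $\BM$-product factorizes and the left-hand side becomes $\sum E(a_{(1)})E(a_{(2)})$. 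Hence I obtain $E = m_\BM\circ(E\otimes E)\circ\Delta_{\AM_1}$; that is, $E$ is idempotent for the convolution product $\star$ on $\mathrm{Hom}(\AM_1,\BM)$, whose unit is $u:=\varepsilon_{\AM_1}(\cdot)\I_\BM$.

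To finish I would use that an algebra homomorphism out of a Hopf algebra is convolution-invertible. Set $E':=E\circ S_{\AM_1}$; then for every $a\in\AM_1$,
\[(E'\star E)(a)=\sum E(S_{\AM_1}(a_{(1)}))E(a_{(2)})=E\Big(\sum S_{\AM_1}(a_{(1)})a_{(2)}\Big)=\varepsilon_{\AM_1}(a)\I_\BM=u(a),\]
using the multiplicativity of $E$ and the antipode identity \eqref{basicpr}, so $E'$ is a left $\star$-inverse of $E$. Convolving the idempotency relation $E\star E=E$ on the left with $E'$ gives, by associativity of $\star$, the equality $(E'\star E)\star E=E'\star E$; the left-hand side equals $u\star E=E$ and the right-hand side equals $u$, whence $E=u$. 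This is exactly $(\id\otimes\varepsilon_{\AM_2})(\alpha(a))=\varepsilon_{\AM_1}(a)\I_\BM$ for all $a\in\AM_1$.

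I should flag the conceptual crux. Idempotency $E\star E=E$ alone is far from forcing $E=u$, so the argument genuinely needs the left convolution inverse supplied by the antipode $S_{\AM_1}$; this is precisely where the full Hopf structure of $\mathcal H_1$ (beyond its bialgebra structure) enters, and it is the point I expect to be the real content rather than the routine bookkeeping of which leg each copy of $\varepsilon_{\AM_2}$ hits.
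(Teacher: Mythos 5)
Your proof is correct, and it takes a genuinely different route from the paper's. The paper works with the operator $U\in\mathcal{L}(\AM_1\otimes\BM\otimes\AM_2)$, $U(a\otimes b\otimes c)=(\id\otimes\alpha)(\Delta_{\AM_1}(a))(\I\otimes b\otimes c)$, applies $\id\otimes\id\otimes\id\otimes\varepsilon_{\AM_2}$ to the identity $W^{\mathcal{H}_2}_{34}U_{123}(W^{\mathcal{H}_2}_{34})^{-1}=U_{123}U_{124}$, cancels the factor $U_{123}$ using the invertibility of $U$ (which is where $S_{\AM_1}$ enters, via $U^{-1}$), and then strips off the extra $\Delta_{\AM_1}$-leg with Lemma \ref{easy}. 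You instead stay entirely inside the convolution algebra $\mathrm{Hom}(\AM_1,\BM)$: applying $\id_\BM\otimes\varepsilon_{\AM_2}\otimes\varepsilon_{\AM_2}$ directly to \eqref{swcond} gives the idempotency $E\star E=E$ for $E=(\id_\BM\otimes\varepsilon_{\AM_2})\circ\alpha$, and the standard fact that an algebra map out of a Hopf algebra has convolution inverse $E\circ S_{\AM_1}$ finishes the argument. The two proofs rest on the same mechanism --- cancellation made possible by the antipode of $\mathcal{H}_1$ --- but yours avoids introducing $U$ and the maps $W^{\mathcal{H}_i}$ altogether, which makes it shorter and arguably more transparent; the paper's formulation via $U$ has the advantage of being uniform with the proofs of Theorems \ref{mainthm} and \ref{Spres} and of mirroring the multiplicative-unitary computations of the $\C^*$-algebraic part. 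All the steps you flag as routine (the factorization of the $\BM$-product under the two counits, the collapse $(\varepsilon_{\AM_2}\otimes\varepsilon_{\AM_2})\circ\Delta_{\AM_2}=\varepsilon_{\AM_2}$, and the associativity argument $E=u\star E=(E'\star E)\star E=E'\star(E\star E)=E'\star E=u$) check out.
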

\begin{proof}
The identity 
\[W^{\mathcal{H}_2}_{34}U_{123}(W^{\mathcal{H}_2}_{34})^{-1} = U_{123}U_{124}\] enables us to prove that 
\begin{equation}\label{ep1}(\id_{\AM_1}\otimes\id_{\BM}\otimes\varepsilon_{\AM_2})(U(a\otimes b\otimes c)) = \varepsilon_{\AM_2}(c) a\otimes b\end{equation}
for all $a\in \AM_1$, $b\in\BM$ and $c \in \AM_2$. 
Indeed, let additional $d\in\AM_2$. 
On one hand we have 
\[\begin{split}
W^{\mathcal{H}_2}_{34}U_{123}(W^{\mathcal{H}_2}_{34})^{-1}(a\otimes b\otimes c \otimes d)&=(a_{(1)}\otimes((\id_{\BM}\otimes\Delta_{\AM_2})(\alpha(a_{(2)}))))(\I_{\AM_1}\otimes b\otimes c\otimes d)
\end{split}\]
thus 
\begin{equation}\label{uv}\begin{split}
(\id_{\AM_1}\otimes\id_{\BM}\otimes\id_{\AM_2}\otimes\varepsilon_{\AM_2})&(W^{\mathcal{H}_2}_{34}U_{123}(W^{\mathcal{H}_2}_{34})^{-1}(a\otimes b\otimes c \otimes d))=\\&=
(a_{(1)}\otimes\alpha(a_{(2)}))(\I_{\AM_1}\otimes b\otimes c)\varepsilon_{\AM_2}(d) \\&=
U_{123}(a\otimes b\otimes c)\varepsilon_{\AM_2}(d)
\end{split}\end{equation}
where in the first equality we used the first equation of  \eqref{basicpr}.
On the other hand we have 
\[\begin{split}
(\id\otimes\id\otimes\id\otimes\varepsilon_{\AM_2})&(U_{123}U_{124}(a\otimes b\otimes c\otimes d))=\\&=U_{123} (\id_{\AM_1}\otimes\id_{\BM}\otimes\id_{\AM_2}\otimes\varepsilon_{\AM_2})  (U_{124}(a\otimes b\otimes c\otimes d))
\end{split}\] which together with \eqref{uv} shows that 
\begin{equation}\label{auxU}U_{123} (\id_{\AM_1}\otimes\id_{\BM}\otimes\id_{\AM_2}\otimes\varepsilon_{\AM_2})(U_{124}(a\otimes b\otimes c\otimes d)) = U_{123}(a\otimes b\otimes c)\varepsilon_{\AM_2}(d).\end{equation}
Using the  invertibility of $U$ we cancel $U_{123}$ in \eqref{auxU} and  we get \eqref{ep1}. 
Putting $b = \I_\BM,c = \I_{\AM_2}$ in \eqref{ep1}  we get
\[(\id_{\AM_1}\otimes((\id_{\BM}\otimes\varepsilon_{\AM_2})\circ\alpha))\circ\Delta_{\AM_1}(a)) = a\otimes\I_{\BM} =(\id_{\AM_1}\otimes \varepsilon_{\AM_1}(\cdot)\I_{\BM})\circ \Delta_{\AM_1}(a).\]
Using Lemma \ref{easy} we  conclude that 
\[(\id_{\BM}\otimes\varepsilon_{\AM_2})(\alpha(a)) = \varepsilon_{\AM_1}(a)\I_{\BM}\] for all $a\in\AM_1$. 
\end{proof}

\begin{theorem}\label{Spres}
Let $\mathcal{H}_1  $, $\mathcal{H}_2$ be   Hopf algebras  and $\alpha:\AM_1\rightarrow\mathcal{B}\otimes \AM_2$  a quantum family of homomorphisms  from $\mathcal{H}_1$ to $\mathcal{H}_2$. Then 
\[\alpha\circ S_{\AM_1} = (\id_{\BM}\otimes S_{\AM_2})\circ \alpha.\]
\end{theorem}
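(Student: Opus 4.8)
The plan is to recast the statement as an instance of the uniqueness of inverses in the convolution algebra of linear maps $\AM_1\to\BM\otimes\AM_2$. On this space I define the convolution product
\[(f\ast g)(a) = f(a_{(1)})\,g(a_{(2)}),\]
where the product on the right is taken in the tensor-product algebra $\BM\otimes\AM_2$ (componentwise in each leg). Coassociativity of $\Delta_{\AM_1}$ together with associativity of $\BM\otimes\AM_2$ makes $\ast$ associative, and its unit is the map $e\colon a\mapsto\varepsilon_{\AM_1}(a)(\I_{\BM}\otimes\I_{\AM_2})$. The strategy is to show that $\alpha\circ S_{\AM_1}$ is a left $\ast$-inverse of $\alpha$ and that $(\id_{\BM}\otimes S_{\AM_2})\circ\alpha$ is a right $\ast$-inverse of $\alpha$; since in a monoid a left inverse and a right inverse of the same element must coincide, the two maps are equal, which is exactly the claim.

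\textbf{Left inverse (easy direction).} Here I would use only that $\alpha$ is a \emph{unital algebra homomorphism}. Evaluating the convolution I get
\[\big((\alpha\circ S_{\AM_1})\ast\alpha\big)(a) = \alpha(S_{\AM_1}(a_{(1)}))\,\alpha(a_{(2)}) = \alpha\big(S_{\AM_1}(a_{(1)})\,a_{(2)}\big) = \alpha(\varepsilon_{\AM_1}(a)\I_{\AM_1}) = \varepsilon_{\AM_1}(a)(\I_{\BM}\otimes\I_{\AM_2}),\]
using multiplicativity of $\alpha$ in the second equality, the antipode axiom \eqref{basicpr} in $\AM_1$ in the third, and unitality of $\alpha$ in the last. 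Thus $(\alpha\circ S_{\AM_1})\ast\alpha = e$.

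\textbf{Right inverse (the crux).} This is where the defining property of a quantum family of homomorphisms and the counit result enter, and it is the step I expect to require the most care. Starting from the defining identity in Sweedler form \eqref{swcond}, both sides live in $\BM\otimes\AM_2\otimes\AM_2$, and I would apply to both the operator that keeps the first ($\BM$-)leg, applies $S_{\AM_2}$ in the third leg, and multiplies the second and third legs. On the left-hand side of \eqref{swcond} this produces precisely $\alpha(a_{(1)})\,(\id_{\BM}\otimes S_{\AM_2})(\alpha(a_{(2)}))$, i.e.\ $\big(\alpha\ast((\id_{\BM}\otimes S_{\AM_2})\circ\alpha)\big)(a)$. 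On the right-hand side the two $\AM_2$-legs of $(\id\otimes\Delta_{\AM_2})(\alpha(a))$ collapse through the antipode axiom \eqref{basicpr} in $\AM_2$, leaving $(\id_{\BM}\otimes\varepsilon_{\AM_2})(\alpha(a))\otimes\I_{\AM_2}$. Invoking Theorem \ref{counitqfh}, $(\id_{\BM}\otimes\varepsilon_{\AM_2})(\alpha(a))=\varepsilon_{\AM_1}(a)\I_{\BM}$, so this equals $\varepsilon_{\AM_1}(a)(\I_{\BM}\otimes\I_{\AM_2})$. Hence $\alpha\ast\big((\id_{\BM}\otimes S_{\AM_2})\circ\alpha\big) = e$.

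\textbf{Conclusion.} With $L\ast\alpha = e$ and $\alpha\ast R = e$ for $L=\alpha\circ S_{\AM_1}$ and $R=(\id_{\BM}\otimes S_{\AM_2})\circ\alpha$, associativity of $\ast$ gives $L = L\ast(\alpha\ast R) = (L\ast\alpha)\ast R = R$, which is the desired identity $\alpha\circ S_{\AM_1} = (\id_{\BM}\otimes S_{\AM_2})\circ\alpha$. It is worth noting that this argument needs neither the commutativity of $\BM$ nor the commutation relation of Theorem \ref{mainthm}; the only nontrivial inputs beyond the Hopf-algebra axioms are the defining relation \eqref{swcond} and the counit preservation of Theorem \ref{counitqfh}.
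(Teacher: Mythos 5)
Your proof is correct. The underlying mechanism is the same as the paper's --- both arguments exhibit $\alpha\circ S_{\AM_1}$ and $(\id_{\BM}\otimes S_{\AM_2})\circ\alpha$ as, respectively, a left and a right inverse of $\alpha$ and conclude by uniqueness of inverses --- but you run it directly in the convolution monoid of linear maps $\AM_1\to\BM\otimes\AM_2$, whereas the paper runs it through the operators $U$, $X$, $Y$ acting on the auxiliary space $\AM_1\otimes\BM\otimes\AM_2$: there one first proves $UX=XU=\id$ with $X$ built from $S_{\AM_1}$, then checks $UY=\id$ with $Y$ built from $S_{\AM_2}$ (using \eqref{swcond}, the antipode axiom in $\AM_2$, and Theorem \ref{counitqfh}), deduces $Y=U^{-1}=X$, and finally strips the extra first leg by setting $b=\I_{\BM}$, $c=\I_{\AM_2}$ and invoking Lemma \ref{easy}. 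Your packaging is leaner: it dispenses with the auxiliary tensor factor, with the invertibility of $U$, and with Lemma \ref{easy}, and it isolates cleanly which hypotheses feed which half --- multiplicativity and unitality of $\alpha$ for the left inverse, and \eqref{swcond} together with Theorem \ref{counitqfh} for the right inverse --- which are exactly the same two nontrivial inputs the paper uses. What the paper's formulation buys is coherence with the rest of the section, where $U$ plays the role of an algebraic multiplicative unitary whose invertibility and exchange relations have already been established; your version is the standard Hopf-algebraic ``convolution inverses are unique'' argument and stands on its own.
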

\begin{proof}
Let us define an operator $Y\in\mathcal{L}(\AM_1\otimes\BM\otimes\AM_2)$
\begin{equation}\label{defY} Y(a\otimes b\otimes c) = (a_{(1)}\otimes ((\id_{\BM}\otimes S_{\AM_2})\alpha(a_{(2)})))(\I_{\AM_1}\otimes b\otimes c)\end{equation} where $a\in \AM_1,  b\in\BM, c\in\AM_2$. 
We compute 
\[
\begin{split}
U&Y(a\otimes b\otimes c)=U(a_{(1)}\otimes ((\id_{\BM}\otimes S_{\AM_2})\alpha(a_{(2)})))(\I_{\AM_1}\otimes b\otimes c)=\\
&=(a_{(1)}\otimes \alpha(a_{(2)}))(\I_{\AM_1}\otimes (\id_{\BM}\otimes S_{\AM_2})\alpha(a_{(3)}))(\I_{\AM_1}\otimes b\otimes c)\\
& = (\id_{\AM_1}\otimes\id_{\BM} \otimes m_{\AM_2})\left((\id_{\AM_1}\otimes \id_{\BM}\otimes  \id_{\AM_2}\otimes S_{\AM_2})(a_{(1)}\otimes \alpha(a_{(2)})_{23}\alpha(a_{(3)})_{24})\right)(\I_{\AM_1}\otimes b\otimes c)
\\
& = (\id_{\AM_1}\otimes\id_{\BM} \otimes m_{\AM_2})\left((\id_{\AM_1}\otimes \id_{\BM}\otimes \id_{\AM_2}\otimes S_{\AM_2})(a_{(1)}\otimes (\id_{\BM}\otimes\Delta_{\AM_2})(\alpha(a_{(2)})))\right)(\I_{\AM_1}\otimes b\otimes c)
\\
& = (a_{(1)}\otimes  ((\id_{\BM}\otimes\varepsilon_{\AM_2})(\alpha(a_{(2)}))) \otimes \I_{\AM_2})(\I_{\AM_1}\otimes b\otimes c)\\
& = (a_{(1)}\varepsilon_{\AM_1}(a_{(2)})\otimes \I_\BM\otimes \I_{\AM_2}) (\I_{\AM_1}\otimes b\otimes c) = a\otimes b\otimes c
\end{split}
\] where in the fifth equality we used the fact that $\alpha$ is a quantum family of homomorphisms of Hopf algebras; in the sixth equality we used \eqref{basicpr}; in the  seventh equality we used \eqref{pser}. 
Since $U$ is invertible, we get  $Y=U^{-1}$. Putting $b=\I_\BM$ and $c=\I_{\AM_2}$ in \eqref{defX} and \eqref{defY}  we get 
\[(\id_{\AM_1}\otimes\alpha)((\id_{\AM_1}\otimes S_{\AM_1})\Delta_{\AM_1}(a)) = (\id_{\AM_1}\otimes \id_{\BM}\otimes S_{\AM_2})(\id_{\AM_1}\otimes\alpha)(\Delta_{\AM_1}(a)) \] for all $a\in\AM_1$. 
Using Lemma \ref{easy} we get the desired equality. 
\end{proof}
\subsection{Adjoint coaction and cocentralizers}\label{exac}
Let $\mathcal{H} = (\AM,\Delta_{\AM},\varepsilon_{\AM},S_{\AM})$ be a Hopf algebra. The adjoint coaction 
$\cad:\AM\to\AM\otimes \AM$ is a linear map such that 
\[\cad(x) = x_{(1)}S_{\AM}(x_{(3)})\otimes x_{(2)}.\] Note that 
\[(\id\otimes \cad)\circ\cad(x) = x_{(1)}S_{\AM}(x_{(5)})\otimes x_{(2)}S_{\AM}(x_{(4)})\otimes x_{(3)} = (\Delta_\AM\otimes\id)\circ\cad(x).\] 
Moreover 
\[\begin{split}(m\otimes\id\otimes\id)\circ(\id\otimes\sigma_{\AM\otimes\AM}\otimes\id)&\circ(\cad\otimes\cad)\circ\Delta_{\AM}(x) = x_{(1)}S_{\AM}(x_{(3)})x_{(4)}S_{\AM}(x_{(6)})\otimes x_{(2)}\otimes x_{(5)}\\& = x_{(1)}S_{\AM}(x_{(4)})\otimes x_{(2)}\otimes x_{(3)}\\& = (\id\otimes \Delta_{\AM})\circ\cad(x).
\end{split}
\]Thus if $\cad$ is an algebra homomorphism then it can be viewed as a quantum family of homomorphisms from $\AM$ to $\AM$. Using  \cite[Theorem 4.2]{ChKasp} we see that this is the case if and only if $\cad(\AM)\subset \mathcal{Z}(\AM)\otimes \AM$ where $ \mathcal{Z}(\AM)$ is the center of $\AM$. Thus we get the algebraic counterpart of Theorem \ref{examp}. 

In the context of \cite{AD1} the adjoint coaction was used for the construction of a   cocenter of a Hopf algebra (see also \cite{ChKasp}). In what follows we shall introduce and discuss  the concept of cocentralizer of a  given  algebra morphism $\Phi:\AM\to\BM$.

\begin{definition}\label{univcom}
Let $\BM$ and $\CM$ be  algebras over a field $k$ and   $\Phi:\AM\to \BM$,  $\Psi:\AM\to \CM$ surjective homomorhisms.   We say that $\Psi$ and $\Phi$  cocommute if 
\[(\Psi\otimes \Phi)\circ\Delta = (\Psi\otimes \Phi)\circ\Delta^{\textrm{op}}.\]
We say that the surjective algebra homomorophism $\Psi^{\textrm{u}}: \AM\to \CM^{\textrm{u}}$ is the {\it cocentralizer} of  $\Phi$ if $\Psi^{\textrm{u}}$ cocommutes with $\Phi$ and for each $\Psi:\AM\to\CM$ cocommuting with $\Phi$ there exists a surjective homomorphism $\pi:\CM^{\textrm{u}}\to \CM$ such that $\Psi = \pi\circ\Psi^{\textrm{u}}$. 
\end{definition}
\begin{proposition} \label{propcom}  
$\Psi$ and $\Phi$  cocommute if and only if 
\begin{equation}\label{coad}(\Phi\otimes\Psi)(\cad(x)) = \I\otimes\Psi(x)\end{equation} for all $x\in \AM$. 
\end{proposition}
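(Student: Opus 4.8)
The plan is to prove both implications by the same mechanism: to bridge the two-leg statement (cocommutativity, an identity in $\CM\otimes\BM$) and the three-leg adjoint-coaction statement by passing through an auxiliary identity carrying one extra leg, and then to collapse that extra leg using one of the two antipode axioms $a_{(1)}S_{\AM}(a_{(2)})=\varepsilon_{\AM}(a)\I=S_{\AM}(a_{(1)})a_{(2)}$ of \eqref{basicpr}. In Sweedler notation the cocommutativity of $\Psi$ and $\Phi$ reads $\Psi(x_{(1)})\otimes\Phi(x_{(2)})=\Psi(x_{(2)})\otimes\Phi(x_{(1)})$, equivalently $\Phi(x_{(1)})\otimes\Psi(x_{(2)})=\Phi(x_{(2)})\otimes\Psi(x_{(1)})$, while $(\Phi\otimes\Psi)(\cad(x))=\Phi(x_{(1)}S_{\AM}(x_{(3)}))\otimes\Psi(x_{(2)})$, so \eqref{coad} asserts precisely that this element equals $\I\otimes\Psi(x)$. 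A point to keep in mind throughout is that $\Phi$ and $\Psi$ are only algebra homomorphisms and that $\BM,\CM$ carry no coalgebra structure; hence every use of a counit, an antipode or coassociativity must be carried out inside $\AM$, before $\Phi$ or $\Psi$ is applied.

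For the implication ``cocommutativity $\Rightarrow$ \eqref{coad}'' I would apply the cocommutativity identity to the first two legs of $(\Delta_{\AM}\otimes\id)\Delta_{\AM}(x)=x_{(1)}\otimes x_{(2)}\otimes x_{(3)}$ and tensor the outcome on the right with $\Phi(S_{\AM}(x_{(3)}))$. By coassociativity this produces the three-leg identity
\[\Phi(x_{(1)})\otimes\Psi(x_{(2)})\otimes\Phi(S_{\AM}(x_{(3)}))=\Phi(x_{(2)})\otimes\Psi(x_{(1)})\otimes\Phi(S_{\AM}(x_{(3)}))\]
in $\BM\otimes\CM\otimes\BM$. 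Multiplying the two outer $\BM$-legs together, the left-hand side becomes $\Phi(x_{(1)}S_{\AM}(x_{(3)}))\otimes\Psi(x_{(2)})=(\Phi\otimes\Psi)(\cad(x))$, whereas on the right-hand side the factor $x_{(2)}S_{\AM}(x_{(3)})$ collapses to $\varepsilon_{\AM}(x_{(2)})\I$ by the antipode axiom, leaving $\I\otimes\Psi(x)$. This is exactly \eqref{coad}.

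For the converse I would run the same bridge in reverse, and this is the step I expect to require the most care: \eqref{coad} is a three-leg identity that has already been ``collapsed'' by an antipode, so to recover the two-leg cocommutativity one must first re-expand by a coproduct and then invoke the complementary antipode axiom. Concretely, I would apply \eqref{coad} to $x_{(1)}$ and tensor on the right with $\Phi(x_{(2)})$; by coassociativity, with the legs on the left coming from the iterated coproduct $x_{(1)}\otimes x_{(2)}\otimes x_{(3)}\otimes x_{(4)}$ and those on the right from $\Delta_{\AM}(x)$, this yields
\[\Phi(x_{(1)}S_{\AM}(x_{(3)}))\otimes\Psi(x_{(2)})\otimes\Phi(x_{(4)})=\I\otimes\Psi(x_{(1)})\otimes\Phi(x_{(2)}).\]
Multiplying the two outer $\BM$-legs, the left-hand side simplifies through $S_{\AM}(x_{(3)})x_{(4)}=\varepsilon_{\AM}(x_{(3)})\I$ to $\Phi(x_{(1)})\otimes\Psi(x_{(2)})$, while the right-hand side becomes $\Phi(x_{(2)})\otimes\Psi(x_{(1)})$. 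Equating the two gives the cocommutativity of $\Psi$ and $\Phi$.

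The only genuine subtlety in both directions is the bookkeeping that justifies ``applying the hypothesis to part of the coproduct''. This is legitimate because each hypothesis is an identity of linear maps valid on all of $\AM$, so it may be post-composed with $\Delta_{\AM}\otimes\id$ (or tensored with an auxiliary leg) and then rewritten by coassociativity; crucially, no coalgebra structure on $\BM$ or $\CM$ is ever invoked, and the two antipode axioms of \eqref{basicpr} are used in complementary roles in the two directions.
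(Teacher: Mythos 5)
Your proof is correct and is essentially the paper's argument written out in Sweedler notation: the paper packages the same computation in the convolution algebra of maps $\AM\to\BM\otimes\CM$, observing that $x\mapsto\Phi(x)\otimes\I$ is convolution-invertible with inverse $x\mapsto\Phi(S_{\AM}(x))\otimes\I$, so that cocommutativity is equivalent to $\pi_1*\pi_2*\pi_1^{-1}=\pi_2$, which is exactly \eqref{coad}. Your ``tensor an extra leg and collapse it with the antipode axiom'' in each direction is precisely right-convolution by $\pi_1^{-1}$ and by $\pi_1$, respectively.
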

\begin{proof}
Let us consider $\pi_1:\AM\to \BM\otimes \CM$, $\pi_1(x) = \Phi(x)\otimes\I$ and  $\pi_2:\AM\to \BM\otimes \CM$, $\pi_2(x) = \I\otimes\Psi(x)$.  Denoting the convolution of $\pi_1$ and $\pi_2$ by $\pi_1*\pi_2$ we see that    $\Phi$ and $\Psi$ cocommute if and only if $\pi_1*\pi_2 = \pi_2*\pi_1$. Let us note that $\pi_1$ is convolution invertible and its inverse is  $\pi_3(x) = \Phi(S_\AM(x))\otimes\I$. Indeed  
\[
\begin{split}
\pi_1*\pi_3(x) &=  \Phi(x_{(1)})\Phi(S_\AM(x_{(2)}))\otimes\I\\
&= \Phi(x_{(1)}) S_\AM(x_{(2)}))\otimes\I\\&=\varepsilon(x)(\I\otimes\I).
\end{split}
\]
Thus $\Phi$ and $\Psi$  cocommute if and only if 
\[\pi_1*\pi_2*\pi_3 = \pi_2\] which is equivalent with  \eqref{coad}. 
\end{proof}
\begin{remark}
Let us consider $\alpha:\AM\to\BM\otimes\AM$,  
$\alpha = (\Phi\otimes \id)\circ\cad$.  Then $\alpha$ satisfies 
\[ (m_{\BM}\otimes\id\otimes\id)\circ(\id\otimes\sigma_{\AM\otimes\BM}\otimes\id)\circ(\alpha\otimes\alpha)\circ\Delta_{\AM}=(\id\otimes\Delta_{\AM})\circ\alpha.\]
Thus flipping $\BM\otimes\AM$ we can view $\alpha$ as the weak coaction  (see also Remark \ref{weakcorem}). 
Theorem \ref{mainthm} shows that if $\alpha$ is an algebra homomorphism then    $\alpha(x)_{12}\alpha(y)_{13} =\alpha(y)_{13}\alpha(x)_{12} $ for all $x,y\in \AM$. 
\end{remark}
In order to give an explicit description of the cocentralizer of $\Phi$ let us consider 
 $\Psi:\AM\to\CM$ cocommuting  with $\Phi$. Using Proposition \ref{propcom} we see that  
\[\mathcal{I} = \textrm{linspan}\{(\mu\otimes\id)(\alpha(x)) -\mu(\I)x:  x\in\AM,\,\, \mu\in \BM^* \}\subset \ker\Psi.\] Thus defining  an ideal $ \mathcal{J} = \AM\mathcal{I} \AM\subset\AM $ we get the quotient  $\AM\to \AM/\mathcal{J}$  which can be identified with cocentralizer of $\Phi$ in the sens of  Definition \ref{univcom}.  
Using results of \cite[Section 2.3]{AD1} we conclude that $\AM/\mathcal{J}$ can be equipped with the (unique) bialgebra structure such that   the  quotient  $\Psi^{\textrm{u}}:\AM\to\AM/\mathcal{J}$ is  the bialgebra map.

\bibliography{hopf}
\bibliographystyle{plain}
\end{document}